\theoremstyle{plain}
\newtheorem{theorem}{Theorem}
\newtheorem{lemma}[theorem]{Lemma}
\newtheorem{corollary}[theorem]{Corollary}
\theoremstyle{definition}
\newtheorem{question}[theorem]{Question}
\theoremstyle{remark}
\title{Improved lower bounds on extremal functions of multidimensional permutation matrices}
\date{}
\author{Jesse Geneson\\
\small Department of Mathematics\\[-0.8ex]
\small MIT\\[-0.8ex]
\small Massachusetts, U.S.A.\\
\small\tt geneson@math.mit.edu\\
}
\begin{document}
\maketitle

\begin{abstract}
A $d$-dimensional zero-one matrix $A$ avoids another $d$-dimensional zero-one matrix $P$ if no submatrix of $A$ can be transformed to $P$ by changing some ones to zeroes. Let $f(n,P,d)$ denote the maximum number of ones in a $d$-dimensional $n \times  \cdots \times n$ zero-one matrix that avoids $P$. 

Fox proved for $n$ sufficiently large that $f(n, P, 2) = 2^{k^{\Theta(1)}}n$ for almost all $k \times k$ permutation matrices $P$. We extend this result by proving for $d \geq 2$ and $n$ sufficiently large that $f(n, P, d) = 2^{k^{\Theta(1)}}n^{d-1}$ for almost all $d$-dimensional permutation matrices $P$ of dimensions $k \times \cdots \times k$.
\end{abstract}

\section{Introduction}\label{5c}
 
An early motivation for bounding matrix extremal functions was to use them for solving problems in computational and discrete geometry \cite{BG, Fure, Mit}. Mitchell wrote an algorithm to find a shortest rectilinear path that avoids obstacles in the plane \cite{Mit} and proved that the complexity of this algorithm is bounded from above in terms of a specific matrix extremal function, which was bounded by Bienstock and Gy\"ori \cite{BG}.  F\"{u}redi \cite{Fure} used matrix extremal functions to derive an upper bound on Erd\H{o}s and Moser's \cite{EM} problem of maximizing the number of unit distances in a convex $n$-gon. Recent interest in the extremal theory of matrices has been spurred by the resolution of the Stanley-Wilf conjecture using the linearity of the extremal functions of forbidden permutation matrices \cite{Kl, MT}.

A $d$-dimensional $n_1 \times \cdots \times n_d$ matrix is denoted by $A = \left(a_{i_1, \ldots , i_d}\right)$, where $1 \le i_l \le n_{\ell}$ for $\ell = 1, 2, \ldots , d$. An \emph{$\ell$-cross section} of matrix $A$ is a maximal set of entries $a_{i_1,\ldots , i_d}$ with $i_{l}$ fixed. An \emph{$\ell$-row} of matrix $A$ is a maximal set of entries $a_{i_1,\ldots , i_d}$ with $i_{j}$ fixed for every $j \neq l$. 

The Kronecker product of two $d$-dimensional $0-1$ matrices $M$ and $N$, denoted by $M \otimes N$, is the $d$-dimensional matrix obtained by replacing each $1$ in $M$ with a copy of $N$ and each $0$ in $M$ with a $0$-matrix the same size as $N$. A $d$-dimensional $k  \times \cdots \times k$ $0-1$ matrix is a \emph{permutation matrix} if each of its $l$-cross sections contains a single one for every $\ell=1, \ldots , d$.

We say that a $d$-dimensional $0-1$ matrix $A$ \emph{contains} another $d$-dimensional $0-1$ matrix $P$ if $A$ has a submatrix that can be transformed into $P$ by changing some number of ones to zeroes. Otherwise, $A$ is said to \emph{avoid} $P$.  Let $f(n, P, d)$ be the maximum number of ones in a $d$-dimensional $n \times \cdots \times n$ $0-1$ matrix that avoids a given $d$-dimensional $0-1$ matrix $P$. It is easy to obtain trivial lower and upper bounds of $n^{d-1} \leq f(n,P,d) \leq n^{d}$ if $P$ has at least two one entries.

The upper bound in the last inequality is a factor of $n$ higher than the lower bound. Most work on improving this bound has been for the case $d = 2$. Pach and Tardos proved that $f(n, P, 2)$ is super-additive in $n$ \cite{PT}. By Fekete's Lemma on super-additive sequences \cite{Fekete}, the sequence $\{ {f(n, P, 2) \over n} \}$ is convergent. The limit is known as the F\"uredi-Hajnal limit \cite{C, Fox}. Cibulka \cite{C} showed that this limit is at least $2(k-1)$ when $P$ is a $k \times k$ permutation matrix and that the limit is exactly $2(k-1)$ when $P$ is the $k \times k$ identity matrix. 

Marcus and Tardos \cite{MT} showed that the F\"uredi-Hajnal limit has an upper bound of $2^{O(k \log k)}$ for every $k \times k$ permutation matrix $P$, and Fox \cite{Fox} improved this upper bound to $2^{O(k)}$. Klazar and Marcus \cite{KM} bounded the extremal function when the $d$-dimensional matrix $P$ is a permutation matrix of size $k \times \cdots \times k$, generalizing the $d = 2$ result \cite{MT} by proving that $f(n,P,d)=O(n^{d-1})$. In particular, they showed that ${f(n, P, d) \over n^{d-1}} = 2^{O(k \log k)}$, which generalizes Marcus and Tardos' upper bound on the F\"uredi-Hajnal limit \cite{ MT}. 

For each fixed $d \geq 2$ there is an upper bound of $2^{O(k)}$ on ${f(n,P,d) \over n^{d-1}}$ for every $d$-dimensional permutation matrix $P$ of dimensions $k \times \cdots \times k$ \cite{Fox, GT}. We show for $d \geq 2$ and $n$ sufficiently large that ${f(n,P,d) \over n^{d-1}}$ has a lower bound of $2^{\Omega(k^{1 / d})}$ for a family of $k \times \cdots \times k$ permutation matrices. A similar result was proved for a different family of permutation matrices in \cite{GT}. 

As a corollary we show for $d \geq 2$ and $n$ sufficiently large that $f(n, P, d) = 2^{k^{\Theta(1)}}n^{d-1}$ for almost all $d$-dimensional permutation matrices $P$ of dimensions $k \times \cdots \times k$. Bounding the extremal function of almost all $d$-dimensional permutation matrices of dimensions $k \times \cdots \times k$ was an open problem at the end of \cite{GT}.

\section{Lower bounds for $d$-dimensional permutation matrices}
\label{constant}

The proof uses the notions of cross section contraction and interval minor containment \cite{Fox}. Contracting several consecutive $l$-cross sections of a $d$-dimensional matrix means replacing these $l$-cross sections by a single $l$-cross section and placing a one in an entry of the new cross section if and only if at least one of the corresponding entries in the original $l$-cross sections is a one. We say that $A$ contains $B$ as an interval minor if we can use repeated cross section contraction to transform $A$ into a matrix which contains $B$. $A$ avoids $B$ as an interval minor if $A$ does not contain $B$ as an interval minor. 

The containment in previous sections is at least as strong as interval minor containment. Indeed, $A$ contains $B$ implies that $A$ contains $B$ as an interval minor. However, since a permutation matrix has only one $1$-entry in every cross section, containment of a permutation matrix $P$ is equivalent to containment of $P$ as an interval minor.

Analogous to $f(n, P, d)$, we define $m(n, P, d)$ to be the maximum number of ones in a $d$-dimensional $n \times \cdots \times n$ zero-one matrix that avoids $P$ as an interval minor. Let $R^{k_1, \ldots , k_d}$ be the $d$-dimensional matrix of dimensions $k_1 \times \ldots \times k_d$ with every entry equal to $1$. 

We observe that
\begin{equation}
\label{fm}
f(n, P, d) \leq m(n, R^{k, \ldots, k}, d) 
\end{equation}
for every $k \times \cdots \times k$ permutation matrix $P$. This follows from the fact that containment of $R^{k, \ldots, k}$ as an interval minor implies containment of $P$.

Define a corner entry of a $k_1 \times \cdots \times k_d$ matrix $P = (p_{i_1, \ldots, i_d})$ to be an entry $p_{i_1, \ldots, i_d}$ located at a corner of $P$, i.e., $i_{\tau}=1$ or $i_{\tau} = k_{\tau}$ for every $1 \leq \tau \leq d$.

\begin{theorem}
\label{lower}
Let $d \geq 2$ and suppose that $k^{1/d}$ is a multiple of $20$. For every $d$-dimensional $k\times \cdots \times k$ permutation matrix $P$ that has a corner $1$-entry and contains $R^{k^{1/d}, \ldots, k^{1/d}}$ as an interval minor, ${f(n,P,d) \over n^{d-1}}=2^{\Omega(k^{1 / d})}$ for $n \geq 2^{ k^{1/d}/20}$.
\end{theorem}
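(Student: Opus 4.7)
Set $s=k^{1/d}$ and $n_0=2^{s/20}$. The plan is to construct, for each $n\ge n_0$, a $d$-dimensional $n\times\cdots\times n$ zero-one matrix $A$ with $2^{\Omega(s)}n^{d-1}$ ones that avoids $P$. The opening move is a reduction: because $P$ is a permutation matrix, ordinary containment of $P$ coincides with interval-minor containment of $P$, and interval-minor containment is transitive. Since by hypothesis $P$ contains $R^{s,\ldots,s}$ as an interval minor, any matrix that avoids $R^{s,\ldots,s}$ as an interval minor automatically avoids $P$; hence it suffices to show $m(n,R^{s,\ldots,s},d)\ge 2^{\Omega(s)}n^{d-1}$ in the stated range.

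The heart of the argument is an explicit base construction $B$ of side length $n_0$ with at least $2^{\Omega(s)}n_0^{d-1}$ ones that avoids $R^{s,\ldots,s}$ as an interval minor. I would build $B$ by indexing entries by $d$-tuples of binary strings of length $s/20$ and placing a $1$ at tuples satisfying a carefully chosen combinatorial or algebraic predicate, designed so that the $1$-entries are sufficiently spread out that no $s$-fold interval refinement in each of the $d$ directions produces $s^d$ nonempty product sub-blocks. The corner $1$-entry hypothesis on $P$ enters here: it allows a canonical origin to be fixed relative to which the minor-embedding argument is anchored, so the predicate need only block one canonical family of embeddings rather than all of them simultaneously. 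For general $n$ that is a multiple of $n_0$, I would then form $A=Q\otimes B$, where $Q$ is a $d$-dimensional $(n/n_0)\times\cdots\times(n/n_0)$ permutation matrix; this places $(n/n_0)^{d-1}$ copies of $B$, yielding at least $(n/n_0)^{d-1}\cdot 2^{\Omega(s)}n_0^{d-1}=2^{\Omega(s)}n^{d-1}$ ones, and $R^{s,\ldots,s}$-avoidance is preserved because any interval refinement of $A$ pulls back along $Q$ to a refinement that either sits inside a single $Q$-block (where the avoidance of $B$ applies) or coarsens to a refinement of $Q$ itself (which is sparse since $Q$ is a permutation). A routine rounding handles $n$ not divisible by $n_0$.

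The main obstacle is the combinatorial design and verification of $B$: the density $2^{\Omega(s)}n_0^{d-1}$ and the absence of every $s^d$-fold interval refinement are simultaneously tight for $n_0=2^{s/20}$, so the defining predicate must be chosen delicately and its avoidance property argued by a careful packing or counting estimate. The corner-entry hypothesis on $P$ is precisely what makes the required density attainable, by restricting the family of $P$-embeddings that $B$ has to block; without it, the same construction would need to rule out embeddings anchored at every boundary face of $P$, forcing a strictly sparser $B$.
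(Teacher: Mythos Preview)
Your opening reduction is correct: since $P$ is a permutation matrix and contains $R^{s,\ldots,s}$ as an interval minor, any matrix avoiding $R^{s,\ldots,s}$ as an interval minor avoids $P$, so $f(n,P,d)\ge m(n,R^{s,\ldots,s},d)$. But the rest of the sketch has two genuine gaps.

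First, the scaling step is wrong for $d\ge 3$. A $d$-dimensional permutation matrix $Q$ of side $m$ has exactly $m$ ones, not $m^{d-1}$; hence $Q\otimes B$ contains only $n/n_0$ copies of $B$, giving roughly $(n/n_0)\cdot 2^{\Omega(s)}n_0^{d-1}=2^{\Omega(s)}n\,n_0^{d-2}$ ones, which falls short of $2^{\Omega(s)}n^{d-1}$ by a factor of $(n/n_0)^{d-2}$. The paper's scaling lemma instead tensors with the anti-diagonal hyperplane matrix $M$ (ones exactly where $i_1+\cdots+i_d$ is constant), which has $\binom{m+d-2}{d-1}\ge m^{d-1}/(d-1)!$ ones and still forces any copy of $P$ into a single block.

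Second, you have misplaced the role of the corner $1$-entry. The base construction $B$ depends only on $R^{s,\ldots,s}$, not on $P$, so no hypothesis on $P$ can help there; indeed the paper simply quotes this construction as a black box (an $N\times\cdots\times N$ matrix with $\Theta(N^{d-1/2})$ ones avoiding $R^{s,\ldots,s}$ as an interval minor, for $N=2^{s/20}$). The corner $1$-entry is used precisely in the scaling step: with the anti-diagonal template $M$, one argues that in any embedding of $P$ into $M\otimes N$, the block containing the corner one of $P$ must coincide with the block containing every other one of $P$ (coordinates can only increase away from the corner, yet the anti-diagonal constraint pins the coordinate sum), so a single copy of $N$ would already contain $P$. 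That is what gives $f(mn,P,d)\ge \frac{m^{d-1}}{(d-1)!}f(n,P,d)$, from which the ratio $f(n,P,d)/n^{d-1}$ is controlled for all $n\ge N$ by its value at $n=N$.

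So the architecture you want is: take the existing dense $R^{s,\ldots,s}$-avoiding matrix at scale $N=2^{s/20}$ to get $f(N,P,d)\ge m(N,R^{s,\ldots,s},d)=\Theta(N^{d-1/2})$, and then use the corner hypothesis on $P$ together with the anti-diagonal tensor to propagate $f(N,P,d)/N^{d-1}=2^{\Omega(s)}$ to all $n\ge N$.
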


The theorem is proved in the next few lemmas. The first lemma was proved in \cite{GT}.

\begin{lemma}
\label{homo}\cite{GT}
If $d \geq 2$ and $P$ is a $d$-dimensional zero-one matrix with a corner $1$-entry, then $f(sn,P,d) \ge {s^{d-1} \over (d-1)!}f(n,P,d)$.
\end{lemma}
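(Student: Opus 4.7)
The plan is to prove the lemma by an explicit block construction. I would start from an $n \times \cdots \times n$ matrix $A$ with $f(n,P,d)$ ones that avoids $P$, and use it to build an $sn \times \cdots \times sn$ matrix $B$ by partitioning $B$ into $s^d$ blocks of shape $n \times \cdots \times n$ indexed by $(b_1,\ldots,b_d) \in [s]^d$. After reflecting along any coordinate axes needed (which preserves the extremal function), I would assume the promised corner $1$-entry of $P$ sits at position $(1,\ldots,1)$. Then I place a copy of $A$ in block $(b_1,\ldots,b_d)$ whenever $b_1+\cdots+b_d = s+d-1$, and leave every other block zero. This selects an ``antidiagonal'' family of blocks.

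To check that $B$ avoids $P$, I would suppose for contradiction that $B$ contains $P$. Then there exist sorted index sequences $r^{(\ell)}_1 < \cdots < r^{(\ell)}_k$ for each $\ell$ such that every $1$ of $P$ at position $(i_1,\ldots,i_d)$ is realized by a $1$ of $B$ at $(r^{(1)}_{i_1},\ldots,r^{(d)}_{i_d})$. The corner $1$ of $P$ at $(1,\ldots,1)$ sits in some block $(c_1,\ldots,c_d)$, which must be non-empty, so $\sum c_\ell = s+d-1$. Every other $1$ of the embedded copy sits at coordinates each at least as large as the corresponding coordinate of the corner, hence in a block $(b_1,\ldots,b_d)$ with $b_\ell \geq c_\ell$ for all $\ell$. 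That block is also non-empty, so $\sum b_\ell = s+d-1 = \sum c_\ell$, forcing $b_\ell = c_\ell$ for every $\ell$. Thus the entire embedded copy of $P$ lies inside a single block, but the matrix $A$ placed in that block was chosen to avoid $P$, a contradiction.

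Finally, to count the non-empty blocks I substitute $y_\ell = b_\ell - 1$, turning the defining constraint into $y_1+\cdots+y_d=s-1$ with $y_\ell \geq 0$, so the number of such blocks is $\binom{s+d-2}{d-1}$, which is at least $s^{d-1}/(d-1)!$ since every factor in the numerator is at least $s$. Multiplying by $f(n,P,d)$ gives the claimed lower bound. The only real subtlety is the argument that the hypothetical $P$-embedding must collapse into one block: everything hinges on the corner-$1$ hypothesis providing a coordinate-wise minimum entry whose block, combined with the antidiagonal sum constraint, pins down the blocks of all other $1$-entries of the embedded $P$.
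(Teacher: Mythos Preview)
Your argument is correct. The paper does not actually prove this lemma here---it is quoted from \cite{GT}---but the construction you give is exactly the one the paper spells out for the analogous interval-minor statement (Lemma~\ref{homo1}): place copies of the extremal $n\times\cdots\times n$ matrix on the antidiagonal blocks $b_1+\cdots+b_d=s+d-1$ (equivalently, take the Kronecker product $M\otimes N$ with $M$ the antidiagonal $s\times\cdots\times s$ matrix), use the corner-$1$ hypothesis to force any embedded copy of $P$ into a single block, and count $\binom{s+d-2}{d-1}\ge s^{d-1}/(d-1)!$ blocks.
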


The next lemma is slightly different from a lemma in \cite{GT}.

\begin{lemma}
\label{converge}
If $d \geq 2$ and $P$ is a $d$-dimensional zero-one matrix with a corner $1$-entry, then for any positive integers $m$ and $n \geq m$,
$${f(n,P,d) \over n^{d-1}} \ge {1 \over 2^{d-1}(d-1)!} \ {f(m,P,d) \over m^{d-1}}.$$
\end{lemma}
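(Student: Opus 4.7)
The plan is to reduce to Lemma \ref{homo} by writing $n$ as an integer multiple of $m$ plus a small remainder, then use monotonicity of $f(\cdot, P, d)$ in $n$ to absorb the remainder.

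Concretely, since $n \ge m$, I would apply the division algorithm to write $n = sm + r$ with $0 \le r < m$ and $s \ge 1$. Because $s \ge 1$, this gives the crucial bound $n \le sm + m \le 2sm$, or equivalently $s/n \ge 1/(2m)$. Next, I would observe that $f(n, P, d)$ is monotone nondecreasing in $n$ (any $sm \times \cdots \times sm$ matrix avoiding $P$ extends to an $n \times \cdots \times n$ matrix avoiding $P$ by padding with zeros), so $f(n,P,d) \ge f(sm, P, d)$. Applying Lemma \ref{homo} with the substitution $n \mapsto m$ then yields
\[
f(n,P,d) \;\ge\; f(sm,P,d) \;\ge\; \frac{s^{d-1}}{(d-1)!}\, f(m,P,d).
\]

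Dividing through by $n^{d-1}$ and applying the bound $s/n \ge 1/(2m)$ raised to the $(d-1)$st power gives
\[
\frac{f(n,P,d)}{n^{d-1}} \;\ge\; \frac{1}{(d-1)!} \cdot \frac{s^{d-1}}{n^{d-1}} \cdot f(m,P,d) \;\ge\; \frac{1}{2^{d-1}(d-1)!} \cdot \frac{f(m,P,d)}{m^{d-1}},
\]
which is exactly the claimed inequality.

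There is no real obstacle here: the argument is essentially a standard trick for passing from a super-homogeneity-style inequality valid at integer multiples (Lemma \ref{homo}) to a smoothed inequality valid at all $n \ge m$, at the cost of a factor of $2^{d-1}$ coming from rounding. The only points requiring care are (i) invoking the corner-$1$-entry hypothesis only implicitly, through Lemma \ref{homo}, and (ii) handling the edge case where $r = 0$, in which the bound $n \le 2sm$ is loose but still valid.
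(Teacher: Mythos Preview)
Your proof is correct and essentially identical to the paper's: both write $n = sm + r$ via the division algorithm, use monotonicity of $f(\cdot,P,d)$ to pass from $n$ to $sm$, apply Lemma~\ref{homo}, and absorb the rounding via $n \le (s+1)m \le 2sm$. The only cosmetic difference is that the paper phrases the last step as $(sm+r)^{d-1} \le ((s+1)m)^{d-1}$ before simplifying, while you go directly to $s/n \ge 1/(2m)$.
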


\begin{proof}
Write $n$ as $n=sm+r$, with $0 \le r<m$. Then ${f(n,P,d) \over n^{d-1}} = {f(sm+r,P,d) \over (sm+r)^{d-1}} 
 \ge  {f(sm,P,d) \over (sm+r)^{d-1}} 
\ge {s^{d-1} \over (d-1)!}{f(m,P,d) \over (sm+r)^{d-1}} \ge {s^{d-1} \over (d-1)!}{f(m,P,d) \over ((s+1)m)^{d-1}}  \ge {1 \over 2^{d-1}(d-1)!} \ {f(m,P,d) \over m^{d-1}}$, where Lemma \ref{homo} is used in the second inequality.
\end{proof}

The following lemma was proved in \cite{GT} and was based on Fox's methods for the $d=2$ case in \cite{Fox2}.

\begin{lemma}
\label{exists}
\cite{GT}
For each $d \geq 2$ and $\ell$ that is a multiple of $20$, there exists an $N \times \cdots \times N$ zero-one matrix $A$, with $N=2^{\ell/20}$, that has $\Theta(N^{d-1/2})$ ones and avoids $R^{\ell,\ldots, \ell}$ as an interval minor.
\end{lemma}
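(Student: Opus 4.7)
The plan is to deduce the general $d \geq 2$ statement from the two-dimensional case and to obtain the two-dimensional case from Fox's construction in \cite{Fox2}. The base case I would aim for is: for every $\ell$ that is a multiple of $20$ and $N = 2^{\ell/20}$, there is an $N \times N$ zero-one matrix $A_2$ with $\Theta(N^{3/2})$ ones that avoids $R^{\ell,\ell}$ as an interval minor. For $d = 2$ this is exactly the desired conclusion, so assume $d \geq 3$ and define the $d$-dimensional $N \times \cdots \times N$ matrix $A$ by
\[
A_{i_1, i_2, i_3, \ldots, i_d} := (A_2)_{i_1, i_2} \quad \text{for every } (i_3, \ldots, i_d) \in \{1, \ldots, N\}^{d-2},
\]
so that $A$ is constant along its last $d - 2$ axes. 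Then $A$ has $N^{d-2} \cdot \Theta(N^{3/2}) = \Theta(N^{d - 1/2})$ ones, as required.

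To verify that $A$ avoids $R^{\ell, \ldots, \ell}$ as an interval minor, I would argue by contradiction. Suppose $A$ contains such a minor. Then for each axis $\tau \in \{1, \ldots, d\}$ there is a partition of $\{1, \ldots, N\}$ into $\ell$ consecutive intervals such that every one of the resulting $\ell^d$ sub-boxes contains a $1$. Since $A$ is constant along its last $d - 2$ axes, a sub-box of $A$ is non-empty if and only if the block of $A_2$ determined by the axis-$1$ and axis-$2$ partitions is non-empty. Consequently the induced $\ell \times \ell$ partition of $A_2$ has every block non-empty, so $A_2$ contains $R^{\ell, \ell}$ as an interval minor, contradicting the choice of $A_2$.

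The main obstacle is the two-dimensional base case itself, which I would obtain via a recursive Kronecker construction following \cite{Fox2}. Fix a seed matrix $B$ of dimensions $b \times b$ with $\Theta(b^{3/2})$ ones that avoids $R^{s_0, s_0}$ as an interval minor; existence of such a $B$ should follow from a probabilistic argument with edge-density $b^{-1/2}$ combined with a union bound over the $\binom{b - 1}{s_0 - 1}^2$ interval partitions. Set $A_2 := B^{\otimes t}$. The Kronecker power is $b^t \times b^t$ with $b^{3t/2}$ ones, hitting the $N^{3/2}$ target for $N = b^t$. A gluing lemma of the form ``if $M$ avoids $R^{s,s}$ and $M'$ avoids $R^{s',s'}$ as interval minors, then $M \otimes M'$ avoids $R^{s + s' - 1, s + s' - 1}$'' then implies inductively that $A_2 = B^{\otimes t}$ avoids $R^{C t, C t}$ as an interval minor for an appropriate constant $C$. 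Calibrating $b$ and $C$ so that $b^{1/C} = 2^{1/20}$ and taking $t = \ell/C$ yields $N = b^{\ell/C} = 2^{\ell/20}$, matching the statement (and explaining the requirement that $\ell$ be a multiple of $20$). The delicate part is the gluing lemma itself: given an alleged $R^{s + s' - 1, s + s' - 1}$ interval minor in $M \otimes M'$, one classifies its row (resp.\ column) intervals as either contained in a single macro-row (macro-column) of $M$ or spanning several, and then uses a pigeonhole argument to extract either an $R^{s, s}$ interval minor in $M$ from the spanning intervals or an $R^{s', s'}$ interval minor in some single macro-block copy of $M'$ from the confined intervals, contradicting one of the two avoidance hypotheses.
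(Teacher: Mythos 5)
You should first note that the paper does not prove this lemma at all; it is quoted from \cite{GT}, whose proof lifts Fox's two-dimensional construction to $d$ dimensions. Your reduction of the $d\geq 3$ case to $d=2$ by making $A$ constant along the last $d-2$ axes is correct (a sub-box of the cylinder is nonempty exactly when the corresponding block of $A_2$ is, and the count $N^{d-2}\cdot\Theta(N^{3/2})=\Theta(N^{d-1/2})$ is right), and this is in the same spirit as the cited source: the exponent $d-\tfrac12=(d-2)+\tfrac32$ is precisely the signature of a cylinder over the planar construction. The genuine gap is in your two-dimensional base case, and it is fatal in both of its ingredients. First, the ``gluing lemma'' you rely on is false as stated: take $M=R^{2,2}$, which avoids $R^{3,3}$ as an interval minor (it has only two columns), and $M'=R^{4,4}$, which avoids $R^{5,5}$; then $M\otimes M'=R^{8,8}$ contains $R^{7,7}=R^{3+5-1,3+5-1}$ as an interval minor (seven singleton row and column intervals). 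The failure mode is exactly the case your classification sketch glosses over: all intervals of the alleged minor can be confined to single macro rows and columns and yet be spread over many different macro blocks, so neither projection produces a large minor in $M$ or in a single copy of $M'$. Since $m(n,R^{k,k},2)=2^{\Theta(k)}n$, the avoidance parameter of a Kronecker product of generic factors behaves multiplicatively, not additively, so no generic additive gluing lemma of this form can hold; Fox's construction exploits the specific structure of the matrices being multiplied.

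Second, the probabilistic seed does not exist by the argument you propose. For a random $b\times b$ matrix with entry density $b^{-1/2}$, the equipartition of the rows and columns into $s_0$ intervals of length $b/s_0$ gives cells each containing $(b/s_0)^2 b^{-1/2}=b^{3/2}/s_0^2$ ones in expectation, which is enormous whenever $s_0=O(\log b)$; hence with probability $1-o(1)$ every cell is nonempty and the matrix \emph{contains} $R^{s_0,s_0}$ as an interval minor. A union bound over partitions cannot help, because the individual containment events already have probability close to $1$; in fact such a random matrix contains $R^{s,s}$ for all $s$ up to roughly $b^{3/4}$. But your calibration $b^{1/C}=2^{1/20}$ forces $s_0=\Theta(\log b)$, so the seed you need --- density $b^{-1/2}$ while avoiding an all-ones interval minor of logarithmic order --- is itself an instance of the very lemma being proved, making the plan circular. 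To repair the argument you would have to reproduce Fox's actual structured (recursive/digital) construction for $d=2$, or simply cite it as the paper does, rather than derive it from a random seed and an additive Kronecker product lemma.
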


Now we prove the first theorem.

\begin{proof} [Proof of Theorem \ref{lower}]

Let $\ell= k^{1/d}$ be a multiple of $20$, and let $P$ be any $d$-dimensional permutation matrix of size $k \times \cdots \times k$ that contains $R^{\ell, \ldots , \ell}$ as an interval minor and has at least one corner $1$-entry. Since $P$ contains $R^{\ell,\ldots,\ell}$ as an interval minor, $f(N,P,d)\ge m(N,R^{\ell, \ldots, \ell},d)$ for $N = 2^{\ell/20}$, which by Lemma \ref{converge} and Lemma \ref{exists} implies for $n \geq N$ that $${f(n,P,d) \over n^{d-1}}  \ge {1 \over 2^{d-1} (d-1)!} \ {m(N, R^{\ell, \ldots, \ell}, d) \over N^{d - 1}} 
= 2^{\Omega(k^{1 / d})}.
$$
\end{proof}

We extend the lower bound to a different class of matrices in the next corollary.

\begin{corollary}
Let $d \geq 2$ and $\ell=k^{1/d}$ such that $\ell-1$ is a multiple of $20$, and let $P$ be any $d$-dimensional permutation matrix of size $k \times \cdots \times k$ that contains $R^{\ell, \ldots , \ell}$ as an interval minor. If $N = 2^{(\ell-1)/20}$, then ${f(n,P,d) \over n^{d-1}}  = 2^{\Omega(k^{1 / d})}$ for $n \geq N$.
\end{corollary}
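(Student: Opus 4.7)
The plan is to reduce the corollary to Theorem \ref{lower} by constructing, inside $P$, a sub-permutation matrix $P''$ that has a corner $1$-entry and contains $R^{\ell-1,\ldots,\ell-1}$ as an interval minor. The proof of Theorem \ref{lower} uses only these two features of the input permutation matrix, so rerunning that argument on $P''$ with parameter $\ell-1$ (a multiple of $20$ by hypothesis) in place of $\ell$ will give $f(n,P'',d)/n^{d-1} = 2^{\Omega(k^{1/d})}$ for all $n \ge N = 2^{(\ell-1)/20}$. Since $P''$ is a sub-matrix of $P$, avoiding $P''$ is at least as strong as avoiding $P$, whence $f(n,P,d) \ge f(n,P'',d)$ and the corollary follows.

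To build $P''$, use an interval-minor witness for $R^{\ell,\ldots,\ell}$ in $P$: partition each axis of $P$ into $\ell$ consecutive intervals $I_1^{(j)},\ldots,I_\ell^{(j)}$ so that every product-block $I_{i_1}^{(1)}\times\cdots\times I_{i_d}^{(d)}$ contains a $1$-entry. Fix a $1$-entry $p^*=(a_1,\ldots,a_d)$ in the corner block $(1,\ldots,1)$, and for each of the $(\ell-1)^d$ blocks $(i_1,\ldots,i_d)$ with every $i_j\ge 2$ pick a $1$-entry $q_{(i_1,\ldots,i_d)}$. Let $P''$ be the sub-permutation of $P$ obtained by keeping the $(\ell-1)^d+1$ cross-sections in each direction that pass through the selected entries. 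Because $a_j \in I_1^{(j)}$ while every $q$-entry's direction-$j$ coordinate lies in some $I_{i_j}^{(j)}$ with $i_j \ge 2$, $p^*$ has the strictly smallest direction-$j$ coordinate among the selected entries in every direction; thus after re-indexing $p^*$ occupies position $(1,\ldots,1)$ of $P''$, which is a corner $1$-entry. For the interval-minor claim, partition axis $j$ of $P''$ into $\ell-1$ intervals by letting interval $1$ hold the cross-section of $p^*$ together with the $(\ell-1)^{d-1}$ cross-sections of $q$-entries whose old direction-$j$ block index is $2$, and letting interval $t$ (for $2 \le t \le \ell-1$) hold the $(\ell-1)^{d-1}$ cross-sections of $q$-entries with old direction-$j$ block index $t+1$. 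Each product-block $(t_1,\ldots,t_d)$ of $P''$ then contains the unique $q$-entry whose direction-$j$ old block index equals $2$ when $t_j=1$ and $t_j+1$ when $t_j\ge 2$ (with $p^*$ as an additional witness in the block $(1,\ldots,1)$), so $P''$ contains $R^{\ell-1,\ldots,\ell-1}$ as an interval minor.

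The main bookkeeping point is the enlarged first interval of $P''$ in each direction: merging the cross-section of $p^*$ with those of the $q$-entries at old block index $2$ must still leave a valid witness in the resulting corner product-block, which is guaranteed by $q_{(2,\ldots,2)}$. With $P''$ in hand the rest is automatic: the interval-minor containment gives $f(N,P'',d) \ge m(N,R^{\ell-1,\ldots,\ell-1},d)$; Lemma \ref{exists} applied with $\ell-1$ yields $m(N,R^{\ell-1,\ldots,\ell-1},d)/N^{d-1} = 2^{\Omega(k^{1/d})}$; and Lemma \ref{converge}, applicable because $P''$ has a corner $1$-entry, extends the bound to all $n \ge N$. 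The off-by-one between $\ell$ and $\ell-1$ is absorbed without loss since $2^{\Omega(\ell-1)} = 2^{\Omega(k^{1/d})}$.
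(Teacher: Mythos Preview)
Your proof is correct and takes essentially the same approach as the paper: build a sub-permutation $P''$ of $P$ with a corner $1$-entry that contains $R^{\ell-1,\ldots,\ell-1}$ as an interval minor, then combine $f(n,P,d)\ge f(n,P'',d)$ with the argument of Theorem~\ref{lower} applied at parameter $\ell-1$. Your construction differs only cosmetically from the paper's (you \emph{select} one representative from the corner block and from each block with all indices $\ge 2$, whereas the paper \emph{deletes} the ones in the boundary blocks and in block $(2,\ldots,2)$ and then removes empty cross sections), and your version makes the corner-$1$-entry claim especially transparent.
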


\begin{proof}
Since $P$ contains $R^{\ell, \ldots , \ell}$ as an interval minor, each one in $P$ corresponds to an entry of $R^{\ell, \ldots , \ell}$. Let $t$ be the corner entry of $R^{\ell, \ldots , \ell}$ with coordinates $(1, \cdots, 1)$. 

Let $P'$ be obtained from $P$ by deleting every one in $P$ that corresponds to an entry besides $t$ in $R^{\ell, \ldots , \ell}$ in the same $q$-cross section as $t$ for some $q = 1, \cdots, d$, deleting the one in $P$ that corresponds to the entry in $R^{\ell, \ldots , \ell}$ with coordinates $(2, \cdots, 2)$, and removing any cross sections with no ones. Then $P'$ contains $R^{\ell-1, \ldots, \ell-1}$ as an interval minor and $P'$ has a corner $1$-entry. Then $f(n,P,d) \ge f(n, P', d) = 2^{\Omega(k^{1 / d})} n^{d-1}$ for $n \geq  2^{(\ell-1)/20}$.
\end{proof}

\section{Sharp bounds on $m(n,R^{k, \ldots, k},d)$}

We proved that ${m(n,R^{k, \ldots, k},d) \over n^{d-1}} = 2^{O(k)}$ in \cite{GT}. In this section, we prove matching lower bounds on ${ m(n,R^{k, \ldots, k},d) \over n^{d - 1}}$ for $n \geq 2^{(k-1)/20}$ such that $k-1$ is a multiple of $20$. 

\begin{lemma}
\label{homo1}
$m(sn,R^{k, \ldots, k},d) \ge {s^{d-1} \over (d-1)!}m(n,R^{k-1, \ldots, k-1},d)$ for all $d \geq 2$, $s \geq 1$, and $n \geq 1$.
\end{lemma}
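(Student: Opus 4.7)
The plan is to mirror the construction behind Lemma \ref{homo} in the interval-minor setting: tensor an extremal $R^{k-1, \ldots, k-1}$-avoiding matrix $A$ of size $n \times \cdots \times n$ with a small hyperplane-indicator matrix $D$ of size $s \times \cdots \times s$, and argue that the Kronecker product $B = D \otimes A$ avoids $R^{k, \ldots, k}$ as an interval minor. Explicitly, let $A$ witness $m(n, R^{k-1, \ldots, k-1}, d)$ and let $D$ be the $d$-dimensional $s \times \cdots \times s$ matrix with $D(i_1, \ldots, i_d) = 1$ iff $i_1 + \cdots + i_d = s + d - 1$. Then $|D| = \binom{s+d-2}{d-1} \geq \frac{s^{d-1}}{(d-1)!}$, so $|B| = |D| \cdot |A| \geq \frac{s^{d-1}}{(d-1)!} m(n, R^{k-1, \ldots, k-1}, d)$, and the lemma follows once the avoidance is verified.

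First I would check that $D$ itself avoids $R^{2, \ldots, 2}$ as an interval minor. For any partition of each $[s]$ into two intervals $[1, t_\tau]$ and $[t_\tau + 1, s]$ at a cut-point $t_\tau \in [s - 1]$, the ``all-lower'' corner cell $\prod_\tau [1, t_\tau]$ contains a one of $D$ only if $\sum_\tau t_\tau \geq s + d - 1$, while the ``all-upper'' corner cell $\prod_\tau [t_\tau + 1, s]$ contains a one only if $\sum_\tau (t_\tau + 1) \leq s + d - 1$. These two conditions are incompatible for $d \geq 1$, so one of the two corners is empty. To then conclude the avoidance of $R^{k, \ldots, k}$ in $B$, I would invoke a Kronecker product lemma for interval minors: \emph{if $M_1$ avoids $R^{s_1, \ldots, s_1}$ and $M_2$ avoids $R^{s_2, \ldots, s_2}$ as interval minors, then $M_1 \otimes M_2$ avoids $R^{s_1 + s_2 - 1, \ldots, s_1 + s_2 - 1}$ as an interval minor}. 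Specializing to $M_1 = D$, $M_2 = A$, $s_1 = 2$, and $s_2 = k - 1$ completes the proof.

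The hard part will be proving this Kronecker product lemma. Given partitions $I_a^{(\tau)}$ of $[n_1 n_2]$ witnessing $R^{k, \ldots, k}$ in $M_1 \otimes M_2$ with $k = s_1 + s_2 - 1$, I would write $\rho_a^{(\tau)} = [\alpha_a^{(\tau)}, \beta_a^{(\tau)}] \subseteq [n_1]$ for the block-range of each interval in the $M_1$-coordinates. The plan is to decompose the $k$ intervals in each dimension into a coarse ``block-level'' partition of $[n_1]$ into at most $s_1$ consecutive intervals, together with fine ``within-block'' partitions of $[n_2]$ into at most $s_2$ consecutive intervals at each block containing a one of $M_1$. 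The key structural tool is the sliding-window property of $\{\rho_a^{(\tau)}\}$ (consecutive block-ranges overlap in at most one block), and the aim is to use a pigeonhole argument on the $k = s_1 + s_2 - 1$ intervals so that either the block-level partition witnesses $R^{s_1, \ldots, s_1}$ as an interval minor in $M_1$ (contradicting its avoidance) or the within-block partition at some block witnesses $R^{s_2, \ldots, s_2}$ as an interval minor in $M_2$ (contradicting its avoidance).
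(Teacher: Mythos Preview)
Your construction is exactly the paper's: the same hyperplane matrix $D$, the same extremal matrix $A$, the same Kronecker product $B = D \otimes A$, and the same count of ones. The divergence is only in how you verify that $B$ avoids $R^{k,\ldots,k}$ as an interval minor.

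The general Kronecker product lemma you state is \emph{false}. Take $d = 2$ and $M_1 = M_2 = J_3$, the all-ones $3 \times 3$ matrix. Each $J_3$ avoids $R^{4,4}$ as an interval minor (one cannot partition a length-$3$ side into four nonempty intervals), so $s_1 = s_2 = 4$ are admissible. Your lemma would then assert that $J_3 \otimes J_3 = J_9$ avoids $R^{7,7}$, but $J_9$ plainly contains $R^{9,9}$, hence $R^{7,7}$, as an interval minor. So the pigeonhole scheme you sketch for the general $(s_1,s_2)$ statement cannot succeed as written.

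What you actually need is only the specialization $s_1 = 2$, and only for this particular $D$. The paper dispatches that directly, with no auxiliary lemma, by using that the one-entries of $D$ form an \emph{antichain} in the coordinatewise order (they all have the same coordinate sum $s+d-1$). If $B$ contained $R^{k,\ldots,k}$, then a one representing the corner $r_{1,\ldots,1}$ and a one representing any $r^{*}$ with all coordinates at least $2$ would lie in $D$-blocks $(i_1,\ldots,i_d)\le(j_1,\ldots,j_d)$ coordinatewise; the antichain property forces $(i_1,\ldots,i_d)=(j_1,\ldots,j_d)$, so all such representatives sit in a single copy of $A$, which would then contain $R^{k-1,\ldots,k-1}$ --- a contradiction. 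Your observation that $D$ avoids $R^{2,\ldots,2}$ is correct but strictly weaker than the antichain property, and it is the latter that makes the argument a few lines long. I would drop the general Kronecker lemma and argue directly from the antichain structure of $D$.
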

\begin{proof}
Let $M$ be an $s \times \cdots\times s$ matrix with ones at the coordinates $(i_1, \ldots, i_d)$ where $i_1+ \cdots+i_d=s+d-1$ and zeroes everywhere else, so that $M$ has ${s+d-2 \choose d-1} \ge {s^{d-1} \over (d-1)!}$ ones. Let $N$ be an $n \times \cdots \times n$ matrix that avoids $R^{k-1, \ldots, k-1}$ as an interval minor and has $m(n,R^{k-1, \ldots, k-1},d)$ ones. It suffices to prove that $M \otimes N$ avoids $R^{k, \ldots, k}$ as an interval minor.

Suppose for contradiction that $M \otimes N$ contains $R^{k, \ldots, k}$ as an interval minor. Let $r_{1, \ldots, 1}$ be the corner $1$-entry of $R^{k, \ldots, k}$ with every coordinate minimal, and pick an arbitrary $1$-entry $r^*$ in $R^{k, \ldots, k}$ other than $r_{1, \ldots, 1}$ such that $r^*$ and $r_{1, \ldots, 1}$ have no coordinates in common. Suppose that $r_{1, \ldots, 1}$ and $r^*$ are represented by $e_1$ and $e_2$  in $M \otimes N$, respectively. In particular, suppose that $e_1$ and $e_2$ are in the $S$-submatrices $S(i_1, \ldots, i_d)$ and $S(j_1, \ldots, j_d)$, respectively. Note that $i_1+ \cdots +i_d = j_1+ \cdots + j_d$. Since each coordinate of $r^*$ is greater than each coordinate of $r_{1, \ldots, 1}$ in an interval minor copy of $R^{k, \ldots, k}$, then each coordinate of $e_2$ must also be greater than each coordinate of $e_1$ in $M \otimes N$, and hence $i_{\tau} \leq j_{\tau}$ for $\tau = 1, 2, \ldots, d$. It then follows from $i_1+ \cdots +i_d = j_1+ \cdots + j_d$ that $i_{\tau} = j_{\tau}$ for $\tau= 1, 2, \ldots, d$, i.e., the two entries $e_1$ and $e_2$ must be in the same $S$-submatrix in $M \otimes N$. 

Since $r^*$ can be any arbitrary $1$-entry in $R^{k, \ldots, k}$ with no coordinates in common with $r_{1, \ldots, 1}$, a single $S$-submatrix contains $R^{k-1, \ldots, k-1}$ as an interval minor. However, this is a contradiction since each nonzero $S$-submatrix in $M \otimes N$ is a copy of $N$, which avoids $R^{k-1, \ldots, k-1}$ as an interval minor. Thus $M \otimes N$ avoids $R^{k, \ldots, k}$ as an interval minor.
\end{proof}

\begin{lemma}
\label{converge1}
${m(n,R^{k, \ldots, k},d) \over n^{d - 1}} \ge {1 \over 2^{d-1} (d-1)!} \ {m(t,R^{k-1, \ldots, k-1},d) \over t^{d-1}}$ for $d \geq 2$, $t \geq 1$, and $n \geq t$.
\end{lemma}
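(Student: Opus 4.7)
The plan is to reproduce essentially verbatim the argument used for Lemma \ref{converge}, with Lemma \ref{homo1} in place of Lemma \ref{homo}. The only bookkeeping change is that the forbidden interval minor on the right-hand side will drop from $R^{k,\ldots,k}$ to $R^{k-1,\ldots,k-1}$, because that is what Lemma \ref{homo1} produces.

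First, I would write $n = st + r$ with an integer $s \geq 1$ (possible since $n \geq t$) and $0 \leq r < t$. I would then chain:
$$m(n,R^{k,\ldots,k},d) \;\geq\; m(st,R^{k,\ldots,k},d) \;\geq\; \frac{s^{d-1}}{(d-1)!}\, m(t,R^{k-1,\ldots,k-1},d).$$
The first inequality is the monotonicity of $m(\cdot,R^{k,\ldots,k},d)$ in its first argument, which holds because any $st \times \cdots \times st$ matrix avoiding $R^{k,\ldots,k}$ as an interval minor can be embedded into an $n \times \cdots \times n$ matrix by appending zero cross sections, and such padding clearly cannot create a new interval minor copy. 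The second inequality is a direct invocation of Lemma \ref{homo1}.

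Dividing by $n^{d-1}$ and using $n = st + r < (s+1)t \leq 2st$ (the last step uses $s \geq 1$) yields
$$\frac{s^{d-1}}{n^{d-1}} \;\geq\; \frac{s^{d-1}}{(2st)^{d-1}} \;=\; \frac{1}{2^{d-1}\, t^{d-1}},$$
and combining the two displays gives exactly the claimed bound. I do not anticipate any real obstacle: this is a mechanical transcription of the proof of Lemma \ref{converge}, and the only nontrivial step in that earlier proof — applying the super-multiplicativity lemma — has already been done once and for all in Lemma \ref{homo1}. The monotonicity remark and the estimate $s+1 \leq 2s$ are the only two places one might pause, and both are routine.
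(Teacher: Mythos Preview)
Your proposal is correct and follows essentially the same approach as the paper: write $n=st+r$, apply monotonicity and Lemma~\ref{homo1}, then bound $(st+r)^{d-1}\le (2st)^{d-1}$. The paper's proof is terser but identical in substance.
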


\begin{proof}
Suppose that $n=st+r$, with $0 \le r<t$. Then 
${m(n,R^{k, \ldots, k},d) \over n^{d-1}} \ge  {m(st,R^{k, \ldots, k},d) \over (st+r)^{d-1}} \ge {s^{d-1} \over (d-1)!}{m(t,R^{k-1, \ldots, k-1},d) \over (st+r)^{d-1}} \ge {1 \over 2^{d-1} (d-1)!} \ {m(t,R^{k-1, \ldots, k-1},d) \over t^{d-1}}$ by Lemma \ref{homo1}.
\end{proof}

\begin{corollary}
If $d \geq 2$ and $n \geq 2^{(k-1)/20}$ such that $k-1$ is a multiple of $20$, then ${ m(n,R^{k, \ldots, k},d) \over n^{d - 1}} = 2^{\Omega(k)}$.
\end{corollary}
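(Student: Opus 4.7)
The plan is to combine the scaling-down inequality of Lemma \ref{converge1} with the explicit construction guaranteed by Lemma \ref{exists}, applied at parameter $\ell = k-1$ (which is a multiple of $20$ by hypothesis).

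First I would set $t = N := 2^{(k-1)/20}$ and apply Lemma \ref{exists} with $\ell = k-1$, obtaining an $N \times \cdots \times N$ zero-one matrix with $\Theta(N^{d-1/2})$ ones that avoids $R^{k-1,\ldots,k-1}$ as an interval minor. This immediately gives
\[
\frac{m(N,R^{k-1,\ldots,k-1},d)}{N^{d-1}} \;\ge\; c\, N^{1/2} \;=\; c\,2^{(k-1)/40}
\]
for some absolute constant $c>0$ depending only on $d$, hence $\frac{m(N,R^{k-1,\ldots,k-1},d)}{N^{d-1}} = 2^{\Omega(k)}$.

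Next I would invoke Lemma \ref{converge1} with $t=N$ and any $n \ge N = 2^{(k-1)/20}$ to obtain
\[
\frac{m(n,R^{k,\ldots,k},d)}{n^{d-1}} \;\ge\; \frac{1}{2^{d-1}(d-1)!}\,\frac{m(N,R^{k-1,\ldots,k-1},d)}{N^{d-1}}.
\]
Since $d$ is fixed, the prefactor $\frac{1}{2^{d-1}(d-1)!}$ is a positive constant, so this yields $\frac{m(n,R^{k,\ldots,k},d)}{n^{d-1}} = 2^{\Omega(k)}$, as required.

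There is really no main obstacle here: the statement is a direct bookkeeping combination of the two preceding lemmas. The only mild point to check is that the parity hypothesis ``$k-1$ is a multiple of $20$'' is exactly what is needed to apply Lemma \ref{exists} at $\ell = k-1$, and that the range $n \ge 2^{(k-1)/20}$ matches the range of validity of Lemma \ref{converge1} (namely $n \ge t$) once we take $t = N = 2^{(k-1)/20}$.
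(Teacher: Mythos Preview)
Your proposal is correct and follows exactly the same approach as the paper: set $N = 2^{(k-1)/20}$, apply Lemma~\ref{converge1} with $t = N$, and then invoke Lemma~\ref{exists} at $\ell = k-1$ to bound $m(N,R^{k-1,\ldots,k-1},d)/N^{d-1}$ by $2^{\Omega(k)}$. The paper's version is simply more terse.
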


\begin{proof}
Let $N = 2^{(k-1)/20}$. Then ${ m(n,R^{k, \ldots, k},d) \over n^{d - 1}} \ge {1 \over 2^{d-1} (d-1)!} \ {m(N,R^{k-1, \ldots, k-1},d) \over N^{d-1}} = 2^{\Omega(k)}$ for $n \geq N$ by Lemma \ref{exists} and Lemma \ref{converge1}.
\end{proof}

\section{Bounds for almost all $d$-dimensional permutation matrices}

Next we prove for $d \geq 2$ that $f(n, P, d) = 2^{k^{\Theta(1)}}n^{d-1}$ for almost all $d$-dimensional permutation matrices $P$ of dimensions $k \times \cdots \times k$ and for $n$ sufficiently large. The proof of the next lemma is much like the proof of the corresponding lemma for $d = 2$ in \cite{Fox}.

\begin{lemma}
For $d \geq 2$ and $k \geq (d+1) (2\ell)^d \ln{\ell}$, the probability that a random $d$-dimensional permutation matrix of dimensions $k \times \cdots \times k$ avoids $R^{\ell, \ldots , \ell}$ as an interval minor is at most $1/\ell$. 
\end{lemma}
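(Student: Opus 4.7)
My plan is to fix an essentially uniform partition of each of the $d$ axes into $\ell$ consecutive intervals and to show, via a union bound, that with probability at least $1-1/\ell$ every one of the resulting $\ell^d$ boxes contains at least one $1$-entry of the random permutation matrix. When this happens, contracting each interval in each direction turns the matrix into an $\ell \times \cdots \times \ell$ all-ones matrix, witnessing $R^{\ell, \ldots, \ell}$ as an interval minor.

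To sample a uniform random $d$-dimensional $k \times \cdots \times k$ permutation matrix, I would fix $\sigma_1 = \mathrm{id}$ and let $\sigma_2, \ldots, \sigma_d$ be independent uniform random permutations of $[k]$; the $1$-entries are then $(j, \sigma_2(j), \ldots, \sigma_d(j))$ for $j \in [k]$. Partition each axis into $\ell$ consecutive intervals of size $\lfloor k/\ell \rfloor$ or $\lceil k/\ell \rceil$; the hypothesis $k \geq (d+1)(2\ell)^d \ln \ell$ easily implies $k \geq 2\ell$, so every interval has length at least $k/(2\ell)$.

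Fix a box $B = I_1 \times \cdots \times I_d$ and set $p = \prod_{\tau=2}^{d} |I_\tau|/k$, so $p \geq (2\ell)^{-(d-1)}$. The core estimate is $\Pr[B \text{ is empty}] \leq (1-p)^{|I_1|} \leq \exp(-k/(2\ell)^{d})$, which I would prove by induction on $d$. The base case $d=2$ is the elementary bound $\binom{k-|I_2|}{|I_1|}/\binom{k}{|I_1|} \leq (1-|I_2|/k)^{|I_1|}$. For the inductive step, condition on $\sigma_d$ and set $J = I_1 \cap \sigma_d^{-1}(I_d)$; by the inductive hypothesis applied to the remaining $d-1$ permutations (with $I_1$ replaced by $J$) the conditional emptiness probability is at most $(1 - \prod_{\tau=2}^{d-1}|I_\tau|/k)^{|J|}$. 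Since $|J|$ has a hypergeometric distribution, taking expectation and applying Hoeffding's classical comparison between the moment generating function of a hypergeometric variable and that of the binomial with the same mean collapses the bound to $(1-p)^{|I_1|}$.

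Once the per-box estimate is in hand, a union bound over the $\ell^d$ boxes gives total failure probability at most $\ell^d \exp(-k/(2\ell)^d)$, and the hypothesis $k/(2\ell)^d \geq (d+1)\ln \ell$ makes this at most $\ell^d \cdot \ell^{-(d+1)} = 1/\ell$. The main technical obstacle is the per-box tail bound itself, since the $1$-entries of a random permutation matrix are far from independent; the Hoeffding-plus-induction route above is the cleanest elementary approach, but one could equivalently invoke the negative association of the indicators $\mathbf{1}[\sigma_\tau(j) \in I_\tau \text{ for all } \tau \geq 2]$ to deduce $\Pr[B \text{ empty}] \leq \prod_{j \in I_1}(1-p)$ in a single step.
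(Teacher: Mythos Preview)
Your proposal is correct and follows essentially the same strategy as the paper: partition each axis into $\ell$ consecutive blocks, bound the probability that a fixed $\lfloor k/\ell\rfloor\times\cdots\times\lfloor k/\ell\rfloor$ box is empty by roughly $e^{-k/(2\ell)^d}\le\ell^{-(d+1)}$, and finish with a union bound over the $\ell^d$ boxes.

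The only substantive difference is in how the per-box emptiness bound is justified. The paper simply writes down $(1-(1/\ell-1/k)^{d-1})^{k/\ell-1}$, implicitly treating the events ``row $j$ misses the box'' for $j\in I_1$ as if independent; it does not spell out why this product form is valid, which is the one genuinely nontrivial point when $d\ge 3$. Your induction-on-$d$ argument, conditioning on $\sigma_d$, applying the inductive hypothesis to the random set $J=I_1\cap\sigma_d^{-1}(I_d)$, and then invoking Hoeffding's convex-order comparison of the hypergeometric with the binomial to collapse $E[(1-q)^{|J|}]$ to $(1-p)^{|I_1|}$, supplies exactly the missing justification. The negative-association alternative you mention would also work. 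So your argument is the same in outline but strictly more complete on the key technical step.
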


\begin{proof}
Suppose that $k \geq (d+1) (2\ell)^d \ln{\ell}$. Then in a random $d$-dimensional permutation matrix $P$ of dimensions $k \times \cdots \times k$, the probability that a given $\lfloor k/\ell \rfloor \times \cdots \times \lfloor k/\ell \rfloor$ submatrix has all zeros is at most $(1-(1/\ell-1/k)^{d-1})^{k/\ell-1} < (1-\frac{1}{(2\ell)^{d-1}})^{\frac{k}{2\ell}} < e^{\frac{-k}{(2\ell)^{d}}} < \ell^{-(d+1)}$. Let $P'$ be a submatrix of $P$ of dimensions $\ell \lfloor k/\ell \rfloor \times \cdots \times \ell \lfloor k/\ell \rfloor$. If each dimension of $P'$ is partitioned into $\ell$ equal intervals of length $\lfloor k/\ell \rfloor$, then there are $\ell^{d}$ blocks, so the probability that $P$ avoids $R^{\ell, \ldots , \ell}$ as an interval minor is at most $\ell^{d}\ell^{-(d+1)} = 1/\ell$.
\end{proof}

\begin{corollary}
If $d \geq 2$ and $\ell = 20 \lfloor \frac{\frac{1}{2}(\frac{k}{(d+1)\ln{k}})^{1/d}-1}{20} \rfloor+1$, then $f(n, P, d) = 2^{k^{\Theta(1)}}n^{d-1}$ for $n \geq 2^{(\ell-1)/20}$ for almost all $d$-dimensional permutation matrices $P$ of dimensions $k \times \cdots \times k$.
\end{corollary}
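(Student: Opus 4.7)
The plan is to combine the preceding lemma with the Corollary at the end of Section~\ref{constant}. The particular choice $\ell = 20\lfloor(\tfrac{1}{2}(\tfrac{k}{(d+1)\ln k})^{1/d}-1)/20\rfloor+1$ is tailored so that (i) $\ell-1$ is a multiple of $20$ by construction, and (ii) $\ell \leq \tfrac{1}{2}(k/((d+1)\ln k))^{1/d}$. Using $\ln\ell \leq \ln k$, (ii) rearranges to
\[ (d+1)(2\ell)^d \ln\ell \;\leq\; (d+1)(2\ell)^d \ln k \;\leq\; k, \]
which is exactly the hypothesis of the preceding lemma. Invoking that lemma, a random $d$-dimensional $k \times \cdots \times k$ permutation matrix $P$ avoids $R^{\ell,\ldots,\ell}$ as an interval minor with probability at most $1/\ell$. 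Since $\ell = \Theta\bigl((k/\ln k)^{1/d}\bigr) \to \infty$ as $k\to\infty$, almost all such $P$ contain $R^{\ell,\ldots,\ell}$ as an interval minor.

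Next, for every such $P$, I would apply the Corollary at the end of Section~\ref{constant}. Although that corollary is stated for the special value $\ell = k^{1/d}$, its proof reduces matters to Theorem~\ref{lower} applied to a sub-permutation matrix $P'$ (obtained from $P$ by deleting a few ones) which has a corner $1$-entry and contains $R^{\ell-1,\ldots,\ell-1}$ as an interval minor; the $2^{\Omega(\cdot)}$ factor ultimately comes from Lemma~\ref{exists} applied to $R^{\ell-1,\ldots,\ell-1}$, and neither Lemma~\ref{converge} nor Lemma~\ref{exists} cares about the ambient dimension of $P$. Hence the same argument yields $f(n,P,d)/n^{d-1} = 2^{\Omega(\ell)}$ for $n \geq 2^{(\ell-1)/20}$, even in our regime $\ell \ll k^{1/d}$. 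Since $\ell = k^{\Omega(1)}$, this gives $f(n,P,d) = 2^{k^{\Omega(1)}} n^{d-1}$.

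Finally, the matching upper bound $f(n,P,d) \leq 2^{O(k)} n^{d-1}$ of \cite{Fox, GT}, valid for every $d$-dimensional $k \times \cdots \times k$ permutation matrix, trivially gives $f(n,P,d) \leq 2^{k^{O(1)}} n^{d-1}$. Combining this with the lower bound above yields $f(n,P,d) = 2^{k^{\Theta(1)}} n^{d-1}$ for $n \geq 2^{(\ell-1)/20}$ for almost all such $P$.

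The main step to execute carefully is the middle one: justifying that the Corollary of Section~\ref{constant} yields a lower bound of $2^{\Omega(\ell)}$ in the regime $\ell \ll k^{1/d}$. This is a re-reading rather than a new argument, but it must be made explicit that the lower bound obtained from Theorem~\ref{lower} via Lemma~\ref{exists} depends only on the size of the contained $R$-minor and not on the dimension of $P$ itself.
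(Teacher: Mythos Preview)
Your proposal is correct, and the verification that $\ell-1$ is a multiple of $20$ and that $k \geq (d+1)(2\ell)^d\ln\ell$ matches the paper exactly. The difference lies in how you derive the lower bound once you know $P$ contains $R^{\ell,\ldots,\ell}$ as an interval minor.

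You route the argument through the Corollary at the end of Section~\ref{constant}, which forces you to argue that the hypothesis $\ell = k^{1/d}$ there is inessential and that the construction of the sub-permutation matrix $P'$ (with a corner $1$-entry containing $R^{\ell-1,\ldots,\ell-1}$) still goes through. This is correct but somewhat indirect. The paper instead uses the one-line observation that if $P$ contains $R^{\ell,\ldots,\ell}$ as an interval minor, then any matrix avoiding $R^{\ell,\ldots,\ell}$ as an interval minor automatically avoids $P$, whence $f(n,P,d) \geq m(n,R^{\ell,\ldots,\ell},d)$. It then invokes the Section~3 Corollary on $m(n,R^{\ell,\ldots,\ell},d)$ directly (which already has $\ell-1$ a multiple of $20$ as its only structural hypothesis). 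This bypasses any need to revisit Theorem~\ref{lower} or to build $P'$, and makes the dependence on $\ell$ alone, rather than on the ambient size $k$, immediate. Your route buys nothing extra here; the paper's is simply shorter because it works with the interval-minor extremal function $m$ rather than $f$.
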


\begin{proof}
The upper bound was proved for all $d$-dimensional permutation matrices in \cite{GT}. For the lower bound observe that $\ell-1$ is a multiple of $20$ and $k \geq (d+1) (2\ell)^d \ln{\ell}$, so the probability that a random $d$-dimensional permutation matrix of dimensions $k \times \cdots \times k$ avoids $R^{\ell, \ldots , \ell}$ as an interval minor is at most $1/\ell$. Then for almost all $d$-dimensional permutation matrices $P$ of dimensions $k \times \cdots \times k$, $f(n, P, d) \geq m(n,R^{\ell, \ldots, \ell},d) = 2^{k^{\Omega(1)}}n^{d-1}$ for $n \geq 2^{(\ell-1)/20}$.
\end{proof}

\section{Open Problems}\label{narf}

Bounds on $f(n, P, d)$ are unknown for most $d$-dimensional $0-1$ matrices $P$. Besides finding bounds on $f(n, P, d)$ for specific $d$-dimensional $0-1$ matrices $P$, there are a few more general open questions about $f(n, P, d)$ and $m(n, P, d)$.

\begin{question}
Does the sequence $\{ {f(n, P, d) \over n^{d -1 }} \}$ converge for every $d$-dimensional $0-1$ matrix $P$?
\end{question}

\begin{question}
Does the sequence $\{ {m(n, P, d) \over n^{d -1 }} \}$ converge for every $d$-dimensional $0-1$ matrix $P$?
\end{question}

\begin{question}
What is the maximum possible value of ${f(n, P, d) \over n^{d -1 }}$ for all $d$-dimensional $k \times \cdots \times k$ permutation matrices $P$?
\end{question}

\section{Acknowledgments}

This research was supported by the NSF graduate fellowship under grant number 1122374.

\bibliographystyle{amsplain}

\end{document}